\documentclass[10pt,reqno]{amsart}
\usepackage{amsmath, amssymb, amsfonts, amsthm, xcolor, mathdots}
\usepackage{mathtools}
\usepackage{enumerate}
\usepackage{hyperref}
\newtheorem{thm}{Theorem}[section]
\newtheorem{lem}[thm]{Lemma}
\newtheorem{prop}[thm]{Proposition}
\theoremstyle{definition}
\newtheorem{dfn}[thm]{Definition}

\newtheorem{conj}[thm]{Conjecture}

\newtheorem{remark}[thm]{Remark}
\theoremstyle{plain}
\newtheorem{cor}[thm]{Corollary}

\numberwithin{equation}{section}
\usepackage[a4paper, top = 1.2in, bottom = 1.2in, left = 1.2in, right = 1.2in]{geometry}
\numberwithin{equation}{section}

\newcommand{\C}{\mathbb{C}}
\newcommand{\N}{\mathbb{N}}

\newcommand{\Z}{\mathbb{Z}}
\newcommand{\F}{\mathbb{F}}

\newcommand{\mcA}{\mathcal{A}}

\newcommand{\mfn}{\mathfrak{n}}
\newcommand{\mfp}{\mathfrak{p}}

\newcommand{\m}{\mathfrak{m}}

\newcommand{\p}{\mathfrak{p}}

\newcommand{\GL}{\mathrm{GL}}
\newcommand{\Ker}{\mathrm{Ker}}

\newcommand{\Tr}{\mathrm{Tr}}
\newcommand{\new}{\mathrm{new}}
\newcommand{\old}{\mathrm{old}}

\newcommand{\lra}{\longrightarrow}
\newcommand{\ra}{\rightarrow}

\def\1{1\!\!1}

\newcommand{\psmat}[4]{\bigl( \begin{smallmatrix} #1 & #2 \\ #3 & #4 \end{smallmatrix} \bigr)}

\newcommand{\Sla}{S^1_{k,m}({\mathrm{GL}}_2(A))}
\newcommand{\Slt}{S^1_{k,m}(\Gamma_0(t))}
\newcommand{\Sold}{S^{1,old}_{k,m}(\Gamma_0(t))}
\newcommand{\Snew}{S^{1,new}_{k,m}(\Gamma_0(t))}

\title[Drinfeld cusp forms of level $t$]{On Decomposition of Drinfeld cusp forms of level $t$}
\author[T. Dalal]{Tarun Dalal}
\email{tarun.dalal80@gmail.com}
\address{}

\keywords{Drinfeld modular forms, Atkin-Lehner Theory, Commutativity, Direct sum decomposition, Oldforms, Newforms, Hecke operators}
\subjclass[2010]{11F52, 11F25.}
\date{\today}
\begin{document}
\begin{abstract}
In this article, we first prove that the Hecke operator $T_t$ has no eigenform in $\Sla$ with eigenvalue $-t^{k/2}$, when the characteristic of the base field is odd. Furthermore, if $\dim \Slt$ is even, we show that $T_t$ has no eigenform in $\Sla$ with eigenvalue $t^{k/2}$. As a consequence, we prove that the direct sum decomposition $\Slt=\Sold \oplus \Snew$ holds when $\dim \Slt$ is even. 
This proves the conjecture \cite[Conjecture 1.1(3)]{BV19a} of Bandini and Valentino for an infinite family of cusp forms. In particular, for any weight $k$, there exists at least one type $m$ (there are only two possible non-trivial values of $m$) for which the conjecture \cite[Conjecture 1.1(3)]{BV19a} is true.
\end{abstract}
\maketitle
\section{Introduction}
\label{Introduction}
The theory of oldforms and newforms is a well-developed area in the theory of classical modular forms. 
Many properties of modular forms heavily depend on whether they belong to the space of oldforms or newforms. 
For example, the space of newforms  has a basis consisting of normalized eigenforms for all the Hecke operators. Moreover, the field generated by the Fourier coefficients of a normalized newform is a number field. To the best of author's knowledge,  the analogues theory of oldforms and newforms is not yet completely known for Drinfeld modular forms. 

In a series of articles, Bandini and Valentino initiated the study towards Drinfeld oldforms and newform (cf. \cite{BV19}, \cite{BV19a}, \cite{BV20}, \cite{Val}). More precisely, in these series of articles, they defined the notion of $\mfp$-oldforms and $\p$-newforms and studied some of their properties. For this article, we restrict ourself to the case $\deg(\mfp)=1$ i.e., $\mfp=(t)$. Let $\Sla$ (resp., $\Slt$) denote the space of Drinfeld cusp forms for $\GL_2(A)$ (resp., for $\Gamma_0(t)$).

In~\cite{BV19}, the authors defined the notion of oldforms $S_{k,m}^{1,\old}(\Gamma_0(t))$ 
and newforms $S_{k,m}^{1,\new}(\Gamma_0(t))$ for $\Gamma_0(t)$ (which are also known as $t$-oldforms and $t$-newforms). 
In~\cite{BV19a}, the authors made the following conjectures (throughout the article we always assume that the characteristic of the base field is odd)
\begin{conj}(\cite[Conjecture 1.1]{BV19a}) 
\label{level T conjecture}
\begin{enumerate}
\item $\ker(T_t)=0,$ where $T_t$ is acting on $\Sla,$
\item $U_t$ is diagonalizable on $\Slt,$
\item $\Slt=S_{k,m}^{1,\old}(\Gamma_0(t))\oplus S_{k,m}^{1,\new}(\Gamma_0(t)).$
\end{enumerate}
\end{conj}
In~\cite{BV19a} and \cite{BV23}, using harmonic cocycles and matrix of the trace operator, the authors proved that Conjecture~\ref{level T conjecture} is true when $\dim \Sla\leq 1$.
In \cite{DK23}, using the explicit basis of $\Slt$ (constructed in \cite{DK2}), the authors proved that Conjecture \ref{level T conjecture}(3) is true when $\dim \Sla\leq 2$.

Recently in \cite{Vri26}, using the Ramanujan bound (developed in \cite{Vri25}) and the $A$-expansion of Drinfeld modular forms (cf. \cite{Pet13} and \cite{JP14} for the properties of $A$-expansion), Sjoerd de Vries proved Conjecture \ref{level T conjecture}(1) completely, and proved Conjecture \ref{level T conjecture}(3) under the assumption $\dim \Sla\leq p$ (moreover, the author proved these results for any prime ideal $\mfp$). 

An important observation is, for a fixed prime $p$, combining all these known results together, we can  confirm the validity of Conjecture \ref{level T conjecture}(3) only for finitely many values $k,m$.

The aim of this article is to prove that, for a fixed odd prime $p$, Conjecture \ref{level T conjecture}(3) is true for an infinite family of $k,m$.
In fact, we prove a more stronger result that, for any given $k$, there exists at least one $m$ such that Conjecture \ref{level T conjecture}(3) is true for $\Slt$. 
Roughly speaking, this proves that Conjecture \ref{level T conjecture}(3) is true for at least half of the cusp forms.

A common initial idea to prove Conjecture \ref{level T conjecture}(3) is to show that the Hecke operator $T_t$ has no eigenform in $\Sla$ with eigenvalues $\pm t^{\frac{k}{2}}$. 
In this article, we first completely rule out the case $-t^{k/2}$. More precisely, our first main result is the following:
\begin{thm}\label{Main Theorem 1 intro}
$T_t$ has no eigenform in $\Sla$ with eigenvalue $-t^{k/2}$.
\end{thm}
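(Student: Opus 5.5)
Suppose $f \in \Sla$ is a nonzero eigenform with $T_t f = -t^{k/2} f$; we aim for a contradiction. Here $t^{k/2}$ only makes sense when $k$ is even; for odd $k$ there is nothing to prove.

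The plan is to use the level $t$ structure. On $\Slt$ we have the two degeneracy maps $f \mapsto f$ and $f \mapsto f|\delta_t$ with $\delta_t = \psmat{t}{0}{0}{1}$ (suitably normalized), together with the Atkin--Lehner involution $W_t$ and the operator $U_t$. The standard relations from Bandini--Valentino should be:
\begin{itemize}
\item $U_t$ preserves the two-dimensional space $\langle f, f|\delta_t\rangle$, acting by the matrix $\psmat{\lambda}{*}{-t^{k-1}\cdot c}{0}$;
\item equivalently, on oldforms $U_t$ satisfies $U_t^2 - T_t U_t + t^{k-1}(\text{const}) = 0$;
\item $W_t$ interchanges $f$ and $f|\delta_t$ up to the factor $t^{k/2}$ and a sign $(-1)^m$ or similar, coming from the type.
\end{itemize}
The obstruction to the direct sum is exactly the case where $\mathrm{Tr}\circ W_t$ restricted to the old space is degenerate. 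In that case the trace of $f|W_t$ is a multiple of $T_t f \pm t^{k/2} f$, so the eigenvalues $\pm t^{k/2}$ are precisely the degenerate ones. To rule out $-t^{k/2}$ specifically, I would compute the Fourier (or $t$-)expansion at infinity. Writing $f = \sum a_n u^n$, the Hecke operator $T_t$ has the explicit form $T_t f(z) = t^k f(tz) + \sum_{\beta\in\F_q} f\bigl((z+\beta)/t\bigr)$. The sum over $\beta$ kills every $u$-power not divisible in a suitable sense, giving an expression of $T_t f$ in terms of $f(tz)$ and $U_t f$.

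The key step is then a sign/parity argument. Cusp forms for $\GL_2(A)$ of type $m$ have expansions $\sum a_i u^{(q-1)i+m}$ with coefficients in $C$. Using $u_t = u(tz)=u^q/(1+\dots)$ and comparing the leading nonzero coefficients of both sides of $T_t f = -t^{k/2} f$, I would show that the leading term of $T_t f$ has coefficient $t^{k}\cdot(\ldots)$ plus a contribution from $U_t$ that is a $q$-th power relation. Matching valuations, one gets an identity such as $a^{q} t^{k}= -t^{k/2} a$ or $(t^{k/2})^{\,q-1}=-1$ in the $t$-adic completion. Taking $t$-adic valuations (or working modulo $t$, or using the Ramanujan bound of de Vries giving $v_t(\lambda)\ge k/2$ with equality constraints) should force $-1$ to be a $(q-1)$-th power of a specific unit with leading coefficient $1$, which fails because the characteristic is odd and $-1\neq 1$.

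The main obstacle is making this leading-coefficient comparison rigorous, since the $U_t$ part and the $f(tz)$ part may have the same $u$-order. I would first try the $A$-expansion of Petrov: for $f$ with an $A$-expansion, $T_t$ acts on the $A$-expansion coefficients by a simple rule ($T_t f = t^{k-1}\sum\dots$), which makes the comparison clean. In general, I would reduce to the case of a Hecke eigenform with $A$-expansion via de Vries' Ramanujan bound, which pins $v_t$ of the eigenvalue. If that fails, the fallback is to use $W_t$ being an involution on $\Slt$ and show that $-t^{k/2}$ would force $f + f|W_t$ to be simultaneously old, new and nonzero, which is a contradiction.
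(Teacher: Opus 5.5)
Your proposal does not reach a contradiction. Each route you list stops at the one step that would have to detect the sign.

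\textbf{The $u$-expansion route.} The identity you hope to extract ($a^qt^k=-t^{k/2}a$, or $(t^{k/2})^{q-1}=-1$) is not derived, and it has the wrong shape. Since $u(tz)$ has $u$-order $q$ with leading coefficient $1$, the term $t^kf(tz)$ begins at order $q\cdot\mathrm{ord}_u(f)>\mathrm{ord}_u(f)$, with coefficient $t^ka$ rather than $a^q$. So comparing lowest-order terms in $T_tf=-t^{k/2}f$ only constrains the piece $\sum_{\beta\in\F_q}f((z+\beta)/t)$. Its expansion is governed by Goss polynomials, and no relation between $t^{k/2}$ and $-1$ comes out of it.

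\textbf{Valuations and the Ramanujan bound.} More fundamentally, $-t^{k/2}$ and $t^{k/2}$ have the same absolute value at every place. No valuation argument, and in particular no Ramanujan-type bound, can tell them apart; the sign has to be detected algebraically.

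\textbf{The $A$-expansion route.} This only covers eigenforms that have an $A$-expansion. For those, $T_tf=t^{\ell}f$ with $\ell\in\Z$, so the sign is trivially wrong. You give no way to reduce an arbitrary $T_t$-eigenform to that case.

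\textbf{The fallback.} This is circular. By Proposition~\ref{Direct sum t case}, a nonzero form in $\Sold\cap\Snew$ is exactly what an eigenvalue $\pm t^{k/2}$ produces. With no Petersson product available, there is no independent reason for $\Sold\cap\Snew=0$; that is Conjecture~\ref{level T conjecture}(3) itself.

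\textbf{Your ``standard relations''.} These are the classical ones and do not hold here. In characteristic $p$ each of the $q$ terms of $U_t(\delta_tf)$ equals $t^mf$, so $U_t(\delta_tf)=q\,t^mf=0$. Hence $U_t$ has characteristic polynomial $X^2-\lambda X$ on $\langle f,\delta_tf\rangle$.

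\textbf{The missing ingredient} is the explicit linear-algebra model from harmonic cocycles.
\begin{itemize}
\item $\delta_1$ identifies $\Sla$ with $\mathrm{Im}(T)$, where the trace matrix $T=I+M\mcA=M+\mcA$ is idempotent of rank $r=\dim\Sla$ with entries in $\F_p$.
\item Using $F=\mcA D$ and $T\mcA=T$, one has $\delta_1(T_tf)=TF\delta_1(f)=TD\delta_1(f)$, with $D=\mathrm{diag}(t^{s_1},\dots,t^{s_n})$.
\item So a nonzero $f$ with $T_tf=-t^{k/2}f$ would force $\det(t^{-k/2}TD+I)=0$.
\item This determinant is the value at $X=t$ of the Laurent polynomial $\det(TD'(X)+I)\in\F_p[X,X^{-1}]$. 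Since $t$ is transcendental over $\F_p$, it suffices that this Laurent polynomial is nonzero.
\item Specializing $X=1$ gives $D'(1)=I$ and $\det(T+I)=2^r\neq0$, because $T$ is idempotent and $p$ is odd.
\end{itemize}
This specialization is the sign-sensitive step your sketch lacks. For $+t^{k/2}$ the same specialization gives $\det(T-I)=0$ whenever $r>0$. That is why the $+$ case instead needs $\zeta$ of order $q-1$ together with $n$ even.
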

Next, under the assumption that $\dim \Slt$ is even, we rule out the case $t^{k/2}$. More precisely, we prove
\begin{thm}\label{Main Theorem 2 intro}
If $\dim \Slt$ is even, then $T_t$ has no eigenform in $\Sla$ with eigenvalue $t^{k/2}$.
\end{thm}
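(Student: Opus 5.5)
Throughout I use the standard setup of Bandini--Valentino. There are two degeneracy maps $\delta_1, \delta_t : \Sla \to \Slt$, with $\delta_1 f = f$ and $\delta_t f(z)=f(tz)$. There is the trace operator $\Tr$, and the Fricke/Atkin--Lehner involution $W_t$. On the image $\delta_1(\Sla)+\delta_t(\Sla)$, the operator $U_t$ acts through $T_t$ by the relations
\[
U_t\delta_1 f = \delta_1 T_t f - t^{k-1}\delta_t f \ (\text{up to normalisation}),\qquad U_t \delta_t f = \delta_1 f \cdot (\text{scalar}).
\]
The key known fact is this. The sum $\Sold \cap \Snew$ is nonzero exactly when there is a $T_t$-eigenform $f\in\Sla$ with eigenvalue $\lambda$ such that $\lambda^2 = t^k$ (suitably normalised). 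In that case the $2$-dimensional $U_t$-stable block spanned by $\delta_1 f,\delta_t f$ has $U_t$ with a repeated eigenvalue $\pm t^{k/2}$. Theorem~\ref{Main Theorem 1 intro} already rules out $-t^{k/2}$. So I would prove the present statement by contradiction: assume $T_t f = t^{k/2} f$ with $f\neq 0$, and derive that $\dim \Slt$ must be odd.

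The plan is to compare two structures on $\Slt$.

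\textbf{Step 1: the trace of $W_t$.} The involution $W_t$ on $\Slt$ commutes with $U_t$ up to the relation $W_t U_t W_t = t^{k-1}\cdot(\text{adjoint-type operator})$. On $\Snew$, $U_t$ acts as $-t^{k/2-1}\cdot$(sign) times $W_t$, by the defining conditions $\Tr(g)=\Tr(W_t g)=0$. So the eigenvalue of $U_t$ on a newform determines its $W_t$-sign.

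\textbf{Step 2: compute $\dim \Slt$.} I would write it as $2\dim\Sla + \dim\Snew - \dim(\Sold\cap\Snew)$, together with the dimension formula coming from $\Tr$ and $W_t$. Under the assumption, $f$ gives $\delta_1 f - c\,\delta_t f\in \Sold\cap\Snew$ for the specific $c$ making it $W_t$-anti-invariant or $W_t$-invariant.

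\textbf{Step 3: the parity argument, which is the main obstacle.} I want to show that $\dim\Slt \equiv \dim \Sold\cap\Snew \pmod 2$, or a similar relation. Then an eigenvalue $t^{k/2}$ in $\Sla$ would force odd dimension. Concretely, $W_t$ is an involution, so $\Slt = V^+\oplus V^-$. I expect that $\dim V^+-\dim V^-$ can be computed: from the explicit description via harmonic cocycles/the basis of \cite{DK2}, $W_t$ swaps $\delta_1$ and $\delta_t$ images on the oldspace, giving equal contributions. On the newspace the sign is governed by the $U_t$-eigenvalue. The hope is that, in odd characteristic and after using Theorem~\ref{Main Theorem 1 intro} to exclude $-t^{k/2}$, the $U_t$-stable Jordan block coming from $f$ contributes an unbalanced vector, forcing $\dim\Slt$ odd. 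I would first try to pin down $\Tr(W_t)$ on $\Slt$ via the determinant of $U_t$. The determinant of $U_t$ on the old block is $t^{k-1}\cdot$(unit). On the new part it is $\prod \pm t^{k/2-1}$. Reducing modulo a suitable power of $t$, or comparing the sign of $\det U_t$ with a norm from $\det T_t$, should give the parity constraint. Making this sign/valuation bookkeeping rigorous is where I expect the real difficulty.
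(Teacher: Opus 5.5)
There is a genuine gap. Step~3 carries the whole content of the theorem, and it is never carried out. The mechanisms you propose for it also rest on facts imported from the classical setting that fail here.

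In characteristic $p$ one has $U_t\delta_t f = t^{k-m}\sum_{\zeta\in\F_q} f|_{k,m}\psmat{t}{t\zeta}{0}{t} = q\,t^{k-m}t^{2m-k}f = 0$. So for a $T_t$-eigenform $f$ with eigenvalue $\lambda$, the block spanned by $\delta_1 f,\delta_t f$ carries $U_t$-eigenvalues $\lambda$ and $0$, not a repeated eigenvalue. Moreover $\det U_t=0$ on $\Slt$ as soon as $\Sla\neq 0$, so comparing $\det U_t$ with $\det T_t$ gives nothing.

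A $W_t$-signature count is equally blind to your hypothesis. $W_t$ interchanges $\delta_1(\Sla)$ and $\delta_t(\Sla)$ up to scalars, and its matrix $t^{m-k}F$ is antidiagonal. Hence for $n$ even its trace is $0$ both on $\Slt$ and on $\Sold$, whether or not $t^{k/2}$ is an eigenvalue of $T_t$. Finally, the dimension formula in Step~2 presupposes $\Sold+\Snew=\Slt$, which is part of what is at stake.

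The missing idea is arithmetic, not representation-theoretic.
\begin{enumerate}
\item Since $\delta_1 T_t = TD\,\delta_1$ and $\mathrm{Im}(T)=\mathrm{Im}(\delta_1)$, an eigenform with eigenvalue $t^{k/2}$ exists if and only if $\det(t^{-k/2}TD-I)=0$.
\item Here $T=I+M\mcA$ is an idempotent of rank $r=\dim\Sla$ with entries in $\F_p$, and $t^{-k/2}D=\mathrm{diag}(t^{s_i-k/2})$. So this determinant is a Laurent polynomial in $t$ over $\F_p$.
\item Because $t$ is transcendental over $\F_p$, it suffices to show the determinant is nonzero after substituting $t\mapsto\zeta$, where $\zeta$ generates $\F_q^\times$.
\item Since $k=2(j+1)+(n-1)(q-1)$, one gets $s_i-k/2=(2i-1-n)\frac{q-1}{2}$. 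This is an odd multiple of $\frac{q-1}{2}$ exactly because $n$ is even, so $\zeta^{s_i-k/2}=-1$ for every $i$.
\item The specialised matrix is therefore $-T-I$, and $\det(-T-I)=(-2)^r(-1)^{n-r}\neq 0$ because $T^2=T$ and $p$ is odd.
\end{enumerate}
This is where the parity of $\dim\Slt$ genuinely enters. For $n$ odd the same substitution yields $T-I$, which is singular once $r\geq 1$.
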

As a consequence of Theorem \ref{Main Theorem 1 intro} and Theorem \ref{Main Theorem 2 intro}, we prove
\begin{cor}\label{Direct sum corollary intro}
Let $k,m$ be such that $\dim \Slt$ is even. Then $$\Slt=\Sold \oplus \Snew.$$
\end{cor}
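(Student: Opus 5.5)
The corollary should follow formally from Theorem \ref{Main Theorem 1 intro} and Theorem \ref{Main Theorem 2 intro}, combined with the standard linear-algebra criterion of Bandini and Valentino for the decomposition. Recall from \cite{BV19} the setup. The oldforms are $\Sold = \delta_1(\Sla)+\delta_t(\Sla)$, where $\delta_1 f = f$ and $\delta_t f(z)=f(tz)$. The newforms are $\Snew = \ker(\mathrm{Tr}) \cap \ker(\mathrm{Tr}')$, where $\mathrm{Tr}$ is the trace from $\Gamma_0(t)$ to $\GL_2(A)$ and $\mathrm{Tr}' = \mathrm{Tr}\circ W_t$, with $W_t$ the Fricke involution. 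The plan has three steps.

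\textbf{Step 1: a trace criterion.} I will show that $\Slt = \Sold \oplus \Snew$ holds as soon as the restriction of $(\mathrm{Tr},\mathrm{Tr}')$ to $\Sold$ is injective. Set $d=\dim\Sla$, and consider the linear map
$$\Phi\colon \Slt \to \Sla\oplus \Sla,\qquad f\mapsto (\mathrm{Tr} f,\mathrm{Tr}' f).$$
Its kernel is $\Snew$, so $\dim \Snew \geq \dim\Slt - 2d$. On the other hand $\dim \Sold\le 2d$. Suppose $\Phi|_{\Sold}$ is injective. Then $\Sold\cap\Snew=0$, $\dim\Sold=2d$, and $\Phi|_{\Sold}$ is onto $\Sla^2$. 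Hence $\Phi$ is surjective, $\dim\Snew=\dim\Slt-2d$, and the sum $\Sold+\Snew$ fills $\Slt$.

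\textbf{Step 2: the $2\times 2$ operator matrix.} Next I compute $\Phi\circ(\delta_1,\delta_t)$ as a $2\times2$ matrix of operators on $\Sla$, using the formulas in \cite{BV19,BV19a}. Up to the normalisations used there, $\mathrm{Tr}\,\delta_1$ is multiplication by $[\GL_2(A):\Gamma_0(t)] = t+1$, which equals $1$ in characteristic $p$ because $q\equiv 0$. The term $\mathrm{Tr}\,\delta_t$ is essentially $t^{-k}T_t$, up to a constant power of $t$. By symmetry under $W_t$, $\mathrm{Tr}'\delta_t$ is a constant times the identity and $\mathrm{Tr}'\delta_1$ is a multiple of $T_t$. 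The determinant of this block matrix, which is legitimate because all four entries are polynomials in $T_t$ and therefore commute, is then a polynomial of the shape $c\,(T_t^2 - t^{k}\cdot \mathrm{unit})$. After the precise normalisations it should come out as $T_t^2 - t^k$, matching the paper's framing that eigenvalues $\pm t^{k/2}$ are the obstruction.

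\textbf{Step 3: conclude from the two theorems.} The map $\Phi|_{\Sold}$ is injective exactly when this determinant is an invertible operator on $\Sla$, that is, when $T_t$ has no eigenvalue (in an algebraic closure $\C_\infty$) equal to $\pm t^{k/2}$. By Theorem \ref{Main Theorem 1 intro}, $-t^{k/2}$ never occurs. Since $\dim\Slt$ is even, Theorem \ref{Main Theorem 2 intro} excludes $t^{k/2}$. Here an eigenvalue of $T_t$ would yield an eigenform over $\C_\infty$, which is precisely what these theorems rule out. So the determinant is invertible, Step 1 applies, and the decomposition holds.

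The main obstacle is Step 2. The computation itself is in \cite{BV19,BV19a}, so the real risk is getting the exact constants right: the factors of $t$ and the signs coming from the type $m$ and from $W_t$. I would quote their formula expressing $\Sold\cap\Snew$ as a subspace of the $\pm t^{k/2}$-eigenspaces of $T_t$, or the equivalent determinant formula, rather than recomputing it.
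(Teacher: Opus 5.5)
Your proposal is correct and follows the paper's route: the paper quotes the criterion of Proposition \ref{Direct sum t case} and applies Theorems \ref{Main Theorem 1 intro} and \ref{Main Theorem 2 intro}, while your Steps 1--2 re-derive the needed direction of that criterion (indeed $\Phi\circ\delta$ has operator matrix $\psmat{1}{t^{m-k}T_t}{t^{m-k}T_t}{t^{2m-k}}$, whose Schur complement is $t^{2m-2k}(t^k-T_t^2)$). Two small repairs: the index $[\mathrm{GL}_2(A):\Gamma_0(t)]$ is $q+1$, not $t+1$; and in Step 1 you should assume that $\Phi\circ\delta$ is invertible on $(\Sla)^2$, which is exactly what your determinant argument gives, because injectivity of $\Phi|_{\Sold}$ alone does not yield $\dim\Sold=2d$.
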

\begin{remark}
Since $q$ is odd, for a fixed $k\in \N$ the equation $k\equiv 2m \pmod {q-1}$ has only two distinct solutions in $\Z/(q-1)\Z$, one is $m$ and the other is $m+\frac{q-1}{2}$. Moreover, $\dim S^1_{k,m+\frac{q-1}{2}}(\Gamma_0(t))=\dim \Slt-1$ (cf. \cite[\S 3]{BV19a}). Hence for any $k\in \N$, there are exactly two possible values of $m$ such that $\dim \Slt>0$, and Conjecture \ref{level T conjecture}(3) is true for at least one of them.
\end{remark}

\subsection*{The remaining case for Conjecture \ref{level T conjecture}(3)}
Combining Theorem \ref{Main Theorem 1 intro}, Theorem \ref{Main Theorem 2 intro} and Corollary \ref{Direct sum corollary intro} we can say that, in order to prove Conjecture \ref{level T conjecture}(3), it suffices to show $T_t$ has no eigenform in $\Sla$ with eigenvalue $t^{k/2}$, when $\dim \Slt$ is odd.

We conclude this section by briefly discussing the equivalent statements of Conjecture \ref{level T conjecture} for higher level.
\begin{remark}
In recent years, there are some works to check the validity of the equivalent statements of Conjecture \ref{level T conjecture} for higher level. For example, in \cite{DK23}, by producing some infinitely many counter examples, the authors showed that when the level of the cusp forms is not prime, then with the current definition of $\p$-oldforms and $\p$-newforms for level $\p\m$ (cf. \cite{BV20}, \cite{Val} for the definition), the direct sum decomposition $S_{k,m}^{1}(\Gamma_0(\p\m)=S_{k,m}^{1,\p-\old}(\Gamma_0(\p\m))\oplus S_{k,m}^{1,\p-\new}(\Gamma_0(\p\m))$ may not hold. However, till the date, there are no such known counter examples when the level is prime. In fact, in \cite{Vri26}, Sjoerd de Vries proved that  $T_\p$ is injective on $\oplus_{k,m}\Sla$ for any prime ideal $\p$, furthermore he also proved that the direct sum decomposition $S_{k,m}^{1}(\Gamma_0(\p)=S_{k,m}^{1,old}(\Gamma_0(\p))\oplus S_{k,m}^{1,new}(\Gamma_0(\p))$ holds under the assumption $\dim \Sla<p$ (when the level is prime, we write $``\new", ``\old"$ instead of $``\p-\new", ``\p-\old"$). Hence it will be very interesting to check how far the methods of this paper can be adapted to check the validity of the direct sum decomposition $S_{k,m}^{1}(\Gamma_0(\p)=S_{k,m}^{1,old}(\Gamma_0(\p))\oplus S_{k,m}^{1,new}(\Gamma_0(\p))$.
\end{remark}

\subsection*{Notations}
Throughout the article, we use the following notations:
\begin{itemize}
\item Let $q$ be a power of an odd prime number $p$.
\item Let $k\in \N$ and $m\in \Z/(q-1)\Z$ such that $k\equiv 2m \pmod {q-1}$.
      Let $0 \leq m \leq q-2$ denote a lift of $m \in \Z/(q-1)\Z$. For simplicity of notations,
      we use $m$ to denote the integer as well as its class. 
\end{itemize} 
Let $\F_q$ denote the finite field of order $q$. Then we set $A :=\F_q[t]$ 
and
$K :=\F_q(t)$.
Let $K_\infty=\F_q((\frac{1}{t}))$ be the completion of $K$ 
with respect to the infinite place $\infty$ (corresponding to $\frac{1}{t}$-adic valuation) and denote by $\C_\infty$ the completion of an algebraic closure of $K_{\infty}$. Since we will work with some matrices which were computed in \cite{BV19a}, for the benefit of the readers we follow the notations of loc. cit. as closely as possible.

\section{Background Material}
\label{Basic Theory}

In this section, we briefly recall some basic facts about Drinfeld modular forms of level $t$ and introduce some operators which are needed to prove the main theorems
 (cf. \cite{Gos80}, \cite{Gos80a},  \cite{Gek88}, \cite{Tei91}, \cite{GR96}, \cite{BV19}, \cite{BV19a} for more details).

\subsection{Drinfeld modular forms}
The Drinfeld upper half-plane $\Omega:= \C_\infty-K_\infty$, which is analogue to the complex upper half-plane, has a rigid analytic structure, and 
the group $\GL_2(K_\infty)$ acts on $\Omega$ via fractional linear transformations.

\begin{dfn}
	Let $k\in \N$, $m \in \Z/(q-1)\Z$ and $f:\Omega \ra \C_\infty$ 
	be a rigid holomorphic function on $\Omega$. For any $\gamma=\psmat{a}{b}{c}{d}\in \GL_2(K_{\infty})$,
	we define the slash operator $|_{k,m} \gamma$ on $f$ by
	\begin{equation}\label{slash operator}
	f|_{k,m} \gamma := (\det \gamma)^{m}(cz+d)^{-k}f(\gamma z).
	\end{equation}
\end{dfn}
For an ideal $\mfn \subseteq A$, let $\Gamma_0(\mfn)$ denote the congruence subgroup 
$\{\psmat{a}{b}{c}{d}\in \GL_2(A): c\in \mfn \}$.
A Drinfeld modular form of weight $k$, type $m$ for $\Gamma_0(\mfn)$ is defined as follows:
\begin{dfn}
	\label{Definition of DMF}
	A rigid holomorphic function $f:\Omega \ra \C_\infty$ is said to be a Drinfeld modular form of weight $k$, type $m$ 
	for $\Gamma_0(\mfn)$ if 
	\begin{enumerate}
		\item $f|_{k,m}\gamma= f$ , $\forall \gamma\in \Gamma_0(\mfn)$,
		\item $f$ is holomorphic at the cusps of $\Gamma_0(\mfn)$.
	\end{enumerate}
	Furthermore, if $f$ vanishes at the cusps of $\Gamma_0(\mfn)$, then we say $f$ is a Drinfeld cusp form of
	weight $k$, type $m$ for $\Gamma_0(\mfn)$, and the space of such forms is denoted by $S^1_{k,m}(\Gamma_0(\mfn))$.
\end{dfn}
Recall that if $k\not \equiv 2m \pmod {q-1}$, then $S^1_{k,m}(\Gamma_0(\mfn))=\{0\}$. So, without loss of generality, we can assume that $k\equiv 2m \pmod {q-1}$. We now restrict ourself to the case $\mfn=(t)$. 
\subsection{Harmonic cocycles for $\Gamma_0(t)$}
For fixed $k,m$ with $k\equiv 2m\pmod {q-1}$ we define 
$$j\in \{0,1,\ldots,q-2\} \ \text{such that} \ m\equiv j+1\pmod {q-1}, \ n:=\frac{k-2(j+1)}{q-1}+1.$$ 
Then $k=2(j+1)+(n-1)(q-1)$ and $n=\dim \Slt$.
Recall that the space $\Slt$ can be identified with the block $C_j$ of harmonic cocycles with basis $\mathbf{c}_{j+(i-1)(q-1)}, 1\leq i\leq n$ as mentioned in \cite[\S 3]{BV19a}. We will not discuss this relation here since we only work with some matrices that were already computed in loc. cit. with respect to this basis of $C_j$ (we strongly refer the reader to \cite{Tei91} for the relation between Drinfeld cusp forms and Harmonic cocycles).
\subsection{Hecke operators:}
Next, we recall the definitions of $T_t$ and $U_t$-operators. 

\begin{dfn}
\label{Definitions of T_p and U_p operators}
For $f \in \Sla$, we define
\begin{equation}
T_t(f) := t^{k-m} \sum_{\zeta\in \F_q}f|_{k,m}\psmat{1}{\zeta}{0}{t} + t^{k-m}f|_{k,m}\psmat{t}{0}{0}{1}.
\end{equation}
For any $f \in \Slt$, we define
\begin{equation}
U_t(f) := t^{k-m} \sum_{\zeta\in \F_q}f|_{k,m}\psmat{1}{\zeta}{0}{t}.
\end{equation}
With respect to the basis $\{\mathbf{c}_{j+(i-1)(q-1)}\}$, the matrix of $U_t$ is $MD:=(m_{a,b})_{n\times n}\biggl(\begin{smallmatrix}
t^{s_1}&\cdots &0\\
\vdots &\ddots &\vdots\\
0 & \cdots &t^{s_n}
\end{smallmatrix}\biggr)$, where
\begin{equation}\label{eq: value of m_ab}
m_{a,b}:=
\begin{cases}
-\bigg[\binom{j+(n-a)(q-1)}{j+(n-b)(q-1)}+(-1)^{j+1}\binom{j+(n-a)(q-1)}{j+(b-1)(q-1)} \bigg], \ \text{if} \ a\neq b\\
(-1)^j\binom{j+(n-a)(q-1)}{j+(a-1)(q-1)}, \ \text{if} \ a=b,
\end{cases}
\end{equation}
and $s_i:=j+1+(i-1)(q-1)$ for $1\leq i \leq n$ (cf. \cite[(11),(12)]{BV19a}) (note that $m_{a,b}\in \F_p$).
\end{dfn}
\subsection{Fricke involution and Trace operator} The Fricke involution $W_t$ is defined by the action of the matrix $\psmat{0}{-1}{t}{0}$ on $S_{k,m}^1(\Gamma_0(t))$ via the slash operator. 
\begin{dfn}
The trace operator $\Tr$ is defined as
\begin{equation*}
\Tr : \Slt \longrightarrow \Sla  \ \mathrm{with} \ \Tr (f) = \sum_{\gamma\in \Gamma_0(t)\char`\\ \GL_2(A)} f|_{k,m}\gamma.
\end{equation*}
For any $f\in S^1_{k,m}(\Gamma_0(t))$, we have (cf. \cite[(2-7)]{BV19a} or \cite[Proposition 3.8]{DK23} for more general result)
\begin{equation}
\Tr(f)=f+ t^{-m}U_t(f|W_t).
\end{equation}
\end{dfn}
From \cite[(14)]{BV19a}, we know that the matrix associated to the action of $W_t$ on $C_j$ is given by
\begin{equation}
t^{m-k}F:=t^{m-k}\biggl(\begin{smallmatrix}
0&\cdots &(-t)^{s_n}\\
\vdots &\iddots &\vdots\\
(-t)^{s_1} & \cdots &0
\end{smallmatrix}\biggr).
\end{equation}
The matrix associated to $\Tr$ on $C_j$ is given by (cf. \cite[(15)]{BV19a})
\begin{equation}\label{matrix of trace operator}
T:=I+M\mcA,\ \text{where} \ 
\mcA:=\biggl(\begin{smallmatrix}
0&\cdots &(-1)^{j+1}\\
\vdots &\iddots &\vdots\\
(-1)^{j+1} & \cdots &0
\end{smallmatrix}\biggr).
\end{equation}
Since $q$ is odd, we have $(-1)^{j+1+(i-1)(q-1)}=(-1)^{j+1}$ and   
\begin{equation}\label{A^2=A}
F=\mcA D \  \text{and} \ \mcA^2=I.
\end{equation}

\subsection{Oldforms and Newforms}\label{definition of newforms oldforms}
We now recall the definition of oldforms and newform from \cite{BV19}.
Consider the map
\begin{align*}
\delta: (\Sla)^2 & \lra \Slt\\
(f,g) & \lra \delta_1f + \delta_t g,
\end{align*} 
where 
$\delta_1$, $\delta_t : \Sla \ra \Slt$ defined by 
$\delta_1(f)=f$ and $\delta_t(f)= f|_{k,m}\psmat{t}{0}{0}{1}.$ By \cite[Proposition 3.1]{BV19}, the mappings $\delta_1,\delta_t$ are injective.

\begin{dfn}
The space of oldforms of level $t$ is defined as $\Sold:=Im(\delta)$.
\end{dfn}

\begin{dfn}
The space of newforms of level $t$ is defined as  
$$\Snew:= \Ker(\Tr)\cap \Ker(\Tr^\prime), \quad \mathrm{where} \ \Tr^\prime(f) : = \Tr(f|W_t).$$
\end{dfn}
\section{Moving from direct sum to  Eigenvalues of the Hecke operator $T_t$}
We recall the following key result (cf. \cite[Remark 5.3]{BV19a} or \cite[Proposition 4.5]{DK23} for a more general statement) which transforms Conjecture \ref{level T conjecture}(3) into proving the non existence of certain eigenforms of $T_t$ operator.
\begin{prop}
\label{Direct sum t case}
The direct sum decomposition
\begin{equation}\label{direct sum decomposition level t in corollary}
\Slt=\Sold \oplus \Snew
\end{equation}
 holds if and only if the $T_t$-operator has no eigenform in $\Sla$ with eigenvalues $\pm t^{\frac{k}{2}}$. 
\end{prop}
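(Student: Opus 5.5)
We follow the standard linear-algebra argument, as in \cite[Remark 5.3]{BV19a}. The first step is to compute the intersection $\Sold\cap\Snew$ and the dimensions of both spaces. Since $\delta_1,\delta_t$ are injective, we begin by describing how $\Tr$ and $\Tr'$ act on the images of $\delta_1$ and $\delta_t$. For $f\in\Sla$ we have $\Tr(\delta_1 f)=\Tr(f)=[\GL_2(A):\Gamma_0(t)]f=(q+1)f=f$ in characteristic $p$. Moreover, $f|W_t = f|_{k,m}\psmat{0}{-1}{t}{0}$ equals, up to the scalar $(\pm1)$ coming from $\psmat{0}{-1}{1}{0}\in\GL_2(A)$, the form $\delta_t f$ rescaled by an explicit power of $t$. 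Hence $\delta_1 f|W_t = c\,\delta_t f$ and $\delta_t f|W_t = c' \delta_1 f$ for explicit constants $c,c'$ (powers of $t$ times signs, with $cc'$ determined by $W_t^2$). Next, the formula $\Tr(g)=g+t^{-m}U_t(g|W_t)$ combined with $T_t f = U_t f + t^{k-m} f|\psmat{t}{0}{0}{1}$ gives
\[
\Tr(\delta_t f)=\lambda\, T_t f,
\]
where $\lambda$ is an explicit nonzero power of $t$ (the $\delta_t$ term from $T_t$ cancels against the ``$g$'' term, after accounting for the constants).

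With these formulas in hand, $(\Tr,\Tr')$ restricted to $\Sold\cong(\Sla)^2$ becomes a $2\times2$ block operator with entries in $\{1,\ \lambda T_t\}$ up to scalars. Concretely, $(f,g)\mapsto\bigl(f+\alpha T_t g,\ \beta T_t f+ g\bigr)$ for explicit scalars $\alpha,\beta$ with $\alpha\beta = t^{-k}$ (a normalization we would verify). An oldform $\delta_1 f+\delta_t g$ is new iff $f=-\alpha T_t g$ and $g=-\beta T_t f$, which forces $T_t^2 f = t^{k} f$. Conversely, any eigenform with $T_t f=\pm t^{k/2}f$ yields a nonzero element of $\Sold\cap\Snew$; nonzero because $\delta$ is injective, which holds since $\delta_1 f=-\delta_t g$ would contradict the behaviour at the cusp, or can be cited from \cite{BV19}. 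Hence $\Sold\cap\Snew=0$ iff $T_t^2-t^k$ is injective on $\Sla$, i.e.\ iff $T_t$ has no eigenvalue $\pm t^{k/2}$. In particular, $\delta$ is injective whenever this intersection vanishes.

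For the sum to be all of $\Slt$ we need a dimension count. $\Snew$ is the kernel of the map $(\Tr,\Tr')\colon\Slt\to(\Sla)^2$, so $\dim\Snew\ge n-2d$, where $d=\dim\Sla$. Conversely, the composite $(\Sla)^2\xrightarrow{\delta}\Slt\xrightarrow{(\Tr,\Tr')}(\Sla)^2$ is exactly the block operator above. That operator is invertible iff $T_t^2-t^k$ is invertible, in which case $(\Tr,\Tr')$ is surjective, giving $\dim\Snew=n-2d$ and $\dim\Sold=2d$. Thus under the eigenvalue condition $\Sold\oplus\Snew$ has dimension $n$ and equals $\Slt$. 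Conversely, if an eigenform exists, the intersection is nonzero and the sum is not direct, so the equivalence holds.

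The main obstacle is getting the constants exactly right: the normalizations of $T_t$, $U_t$, $W_t$ with the type $m$ factors and $\det$ powers, and checking that $\alpha\beta=t^{-k}$ rather than some other power. I would verify this carefully using the matrix identities $F=\mcA D$ and $\mcA^2=I$ together with the matrix $T=I+M\mcA$ of $\Tr$ on $C_j$.
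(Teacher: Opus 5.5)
The paper gives no proof of this proposition; it quotes it from \cite[Remark 5.3]{BV19a} and \cite[Proposition 4.5]{DK23}. Your argument is the standard one behind those references, and it is correct. The constants you left open come out as you predicted. Since $\psmat{0}{-1}{t}{0}=\psmat{0}{-1}{1}{0}\psmat{t}{0}{0}{1}$ and $f$ is $\GL_2(A)$-invariant, you get $\delta_1 f|W_t=\delta_t f$ exactly, with no sign. You also get $\delta_t f|W_t=t^{2m-k}f$ and $\Tr(\delta_t f)=\delta_t f+t^{m-k}U_tf=t^{m-k}T_tf$. So with $B:=(\Tr,\Tr')\circ\delta$,
\[
B(f_1,f_2)=\bigl(f_1+t^{m-k}T_tf_2,\ t^{m-k}T_tf_1+t^{2m-k}f_2\bigr).
\]
After rescaling the second component this gives $\alpha=t^{m-k}$, $\beta=t^{-m}$ and $\alpha\beta=t^{-k}$. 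Your invertibility criterion and dimension count then go through as written.

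The one step to tighten is the injectivity of $\delta$ itself. You need it in the ``only if'' direction, to know that $\Sold\cap\Snew=\delta(\ker B)\neq 0$. The paper only records that $\delta_1$ and $\delta_t$ are separately injective. Also, the cusp $\infty$ alone gives no contradiction: $\delta_1f_1=-\delta_tf_2$ only yields $\mathrm{ord}_\infty f_1=q\,\mathrm{ord}_\infty f_2$, because $u(tz)$ is $u^q$ times a unit of $\C_\infty[[u]]$. Bring in the other cusp via $W_t$. Applying $W_t$ to $\delta_1f_1=-\delta_tf_2$ gives $f_2=-t^{k-2m}\,\delta_tf_1$, and substituting back gives $f_1(z)=t^{k}f_1(t^2z)$. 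Hence $\mathrm{ord}_\infty f_1=q^2\,\mathrm{ord}_\infty f_1$, which forces $f_1=0$ and then $f_2=0$.

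Separately, your sentence that $\delta$ is injective whenever the intersection vanishes is circular where it stands. In the ``if'' direction you do not need it, since invertibility of $B$ already implies that $\delta$ is injective.
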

Since the matrix corresponding to $T_t$ is difficult to handle, our first aim is to find a relation between $T_t$ operator and some matrix which is relatively easy to handle. We start by giving an alternative description of the matrix $T$ in terms of $M$ and $\mcA$.
For $1\leq a,b\leq n$, define
$$\eta(a,b):=\binom{j+(n-a)(q-1)}{j+(n-b)(q-1)},\ \text{and} \ \mathbf{1}(a,b):=
\begin{cases}
1, \ \text{if} \ a=b\\
0, \ \text{otherwise}.
\end{cases}$$
With these new notations, from \eqref{eq: value of m_ab} it is easy to check that 
\begin{equation}
m_{a,b}=\mathbf{1}(a,b)-\eta(a,b)-(-1)^{j+1}\eta(a,n+1-b), \ \text{for} \ 1\leq a,b \leq n.
\end{equation}
The following result can be found in \cite[(6)]{BV23}, but we choose to give an explicit proof without distinguishing the odd and even case.
\begin{prop}\label{another matrix of trace operator}
$T=I+M\mcA=M+\mcA$.
\end{prop}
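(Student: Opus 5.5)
The plan is to reduce the claimed identity to a symmetry of $M-I$ under right multiplication by $\mcA$, which can then be checked entrywise using the unified formula for $m_{a,b}$ stated just before the proposition. Rearranging,
$$M+\mcA-I-M\mcA=(M-I)-(M-I)\mcA=(M-I)(I-\mcA).$$
So $T=I+M\mcA=M+\mcA$ is equivalent to
$$(M-I)\mcA=M-I.$$

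First I will confirm that the unified expression
$$m_{a,b}=\mathbf{1}(a,b)-\eta(a,b)-(-1)^{j+1}\eta(a,n+1-b)$$
agrees with \eqref{eq: value of m_ab} in both cases. This is the only place where any care is needed.

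\emph{Off the diagonal.} For $a\neq b$ we have $n+1-b-1=n-b$ replaced appropriately: $\eta(a,n+1-b)=\binom{j+(n-a)(q-1)}{j+(b-1)(q-1)}$. The formula then matches the first case of \eqref{eq: value of m_ab} directly.

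\emph{On the diagonal.} For $a=b$ we have $\eta(a,a)=1$, so the expression becomes
$$1-1-(-1)^{j+1}\binom{j+(n-a)(q-1)}{j+(a-1)(q-1)}=(-1)^j\binom{j+(n-a)(q-1)}{j+(a-1)(q-1)},$$
which is the diagonal case.

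Hence $(M-I)_{a,b}=-\eta(a,b)-(-1)^{j+1}\eta(a,n+1-b)$ for all $1\le a,b\le n$.

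Next, I will use that $\mcA$ is the antidiagonal matrix with all antidiagonal entries equal to $(-1)^{j+1}$. For any $n\times n$ matrix $X$ this gives
$$(X\mcA)_{a,b}=(-1)^{j+1}X_{a,n+1-b}.$$
Applying this with $X=M-I$ and using $((-1)^{j+1})^2=1$:
$$((M-I)\mcA)_{a,b}=(-1)^{j+1}\bigl[-\eta(a,n+1-b)-(-1)^{j+1}\eta(a,b)\bigr]=-\eta(a,b)-(-1)^{j+1}\eta(a,n+1-b)=(M-I)_{a,b}.$$
This proves $(M-I)\mcA=M-I$, and with it the proposition.

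Both steps are routine. The one genuine point is that the diagonal entries fit the same formula as the off-diagonal ones, because $\eta(a,a)=1$. This uniformity is what lets us avoid separating the cases of $n$ odd and even: when $n$ is odd the middle column satisfies $b=n+1-b$, and the unified formula handles it automatically.
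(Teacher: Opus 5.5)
Your proof is correct and is essentially the paper's argument: both rest on the unified formula for $m_{a,b}$ together with $(X\mcA)_{a,b}=(-1)^{j+1}X_{a,n+1-b}$. Your factorization $(M-I)(I-\mcA)$ only repackages the $\mathbf{1}$-terms via $I\mcA=\mcA$, where the paper tracks $\mathbf{1}(a,n+1-b)$ entrywise. There is one cosmetic slip: in the off-diagonal check the identity you need is $n-(n+1-b)=b-1$, not the garbled ``$n+1-b-1=n-b$''.
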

\begin{proof}
We prove the equality $I+M\mcA=M+\mcA$ entry wise.
For $1\leq a,b \leq n$ we have
\begin{align*}
(M\mcA)_{a,b}&=\sum_{c=1}^nm_{a,c}(\mcA)_{c,b}=(-1)^{j+1}m_{a,n+1-b}\\
&= (-1)^{j+1}\{\mathbf{1}(a,n+1-b)-\eta(a,n+1-b)-(-1)^{j+1}\eta(a,b)\}\\
&=(-1)^{j+1}\mathbf{1}(a,n+1-b)-(-1)^{j+1}\eta(a,n+1-b)-\eta(a,b).
\end{align*}
\begin{align*}
\mathrm{Hence} \ (I+M\mcA)_{a,b}&=\mathbf{1}(a,b)+(M\mcA)_{a,b}\\
&=\mathbf{1}(a,b)+(-1)^{j+1}\mathbf{1}(a,n+1-b)-(-1)^{j+1}\eta(a,n+1-b)-\eta(a,b)\\
&=\mathbf{1}(a,b)-\eta(a,b)-(-1)^{j+1}\eta(a,n+1-b)+(-1)^{j+1}\mathbf{1}(a,n+1-b)\\
&=m_{a,b}+(\mcA)_{a,b}\\
&=(M+\mcA)_{a,b}.
\end{align*}
The result follows.
\end{proof}
We recall the following results from \cite{BV19a}.
\begin{prop}\label{some properties of T}
\begin{enumerate}
\item $T^2=T$.
\item $T\mcA=T$.
\item  $Im(\delta_1)=Im(T)=\ker(T-I)$. \item $r:=rank(T)=\dim\Sla$.
\end{enumerate}
\end{prop}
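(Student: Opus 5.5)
We prove the four items in order. The only inputs are the identities $T=I+M\mcA=M+\mcA$ (Proposition \ref{another matrix of trace operator}) and $\mcA^2=I$ from \eqref{A^2=A}, together with the facts that $\Tr$ is a projection onto forms of full level and that $\delta_1$ is injective.

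\emph{Step 1: item (2).} This is pure algebra. Since $\mcA^2=I$,
$$T\mcA=(I+M\mcA)\mcA=\mcA+M\mcA^2=\mcA+M=T,$$
where the last equality is Proposition \ref{another matrix of trace operator}.

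\emph{Step 2: item (1).} Using item (2) and $\mcA^2=I$ again,
$$T^2=T(I+M\mcA)=T+TM\mcA.$$
So it suffices to show $TM\mcA=0$, or equivalently $TM=0$. This does not follow formally from the two expressions for $T$. Indeed, $T=M+\mcA$ gives $TM=M^2+\mcA M$. Also $I+M\mcA=M+\mcA$ gives $M\mcA=M+\mcA-I$, hence $M=(M+\mcA-I)\mcA=M\mcA+I-\mcA$. Neither identity forces $TM=0$.

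Instead we argue through the modular interpretation:
\begin{itemize}
\item If $f\in\Sla$, then $f|_{k,m}\gamma=f$ for every $\gamma\in\GL_2(A)$. Hence $\Tr(f)=[\GL_2(A):\Gamma_0(t)]\,f=(q+1)f=f$ in characteristic $p$.
\item Since $\Tr(g)\in\Sla$ for every $g\in\Slt$, this gives $\Tr\circ\Tr=\Tr$, i.e.\ $T^2=T$.
\end{itemize}
This is the cleanest route. It uses only that the index is $q+1\equiv 1 \pmod p$.

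\emph{Step 3: item (3).} Identify $\delta_1$ with the inclusion $\Sla\subseteq\Slt$.
\begin{itemize}
\item Every $\Tr(g)$ lies in $\Sla$, so $Im(T)\subseteq Im(\delta_1)$.
\item By the first bullet of Step 2, $\Tr$ fixes $Im(\delta_1)$ pointwise. Hence $Im(\delta_1)\subseteq\ker(T-I)\subseteq Im(T)$.
\end{itemize}
The two chains together give $Im(\delta_1)=Im(T)=\ker(T-I)$.

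\emph{Step 4: item (4).} By item (3), $rank(T)=\dim Im(\delta_1)$. Since $\delta_1$ is injective \cite[Proposition 3.1]{BV19}, this equals $\dim\Sla$.

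\emph{Main obstacle.} The delicate point is making sure the matrix $T$ really represents $\Tr$ on $C_j$ in the same basis in which $\Sla$ sits as $Im(\delta_1)$. This is exactly the content of \cite[(15)]{BV19a}, which we take as given. Once that identification is accepted, the remaining steps are formal.
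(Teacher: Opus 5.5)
Your proof is correct. Items (2) and (4) are argued exactly as in the paper: $T\mcA=(I+M\mcA)\mcA=\mcA+M=T$ via Proposition \ref{another matrix of trace operator} and $\mcA^2=I$, and $\mathrm{rank}(T)=\dim Im(\delta_1)=\dim\Sla$ by injectivity of $\delta_1$.

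The difference is in items (1) and (3). The paper gives no argument of its own there and simply cites \cite[\S 4.3, (4-6)]{BV19a}. You instead derive them directly from the definition of $\Tr$, using two facts:
\begin{itemize}
\item $\Tr\circ\delta_1=(q+1)\,\mathrm{id}=\mathrm{id}$, because $[\GL_2(A):\Gamma_0(t)]=q+1\equiv 1 \pmod p$;
\item $\Tr$ takes values in $\Sla$.
\end{itemize}
From these, idempotence and the chain $Im(\delta_1)\subseteq\ker(T-I)\subseteq Im(T)\subseteq Im(\delta_1)$ follow at once. This is the standard reason behind the cited statements.

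What your version buys is self-containment. The price is that it relies explicitly on the matrix $T$ of \cite[(15)]{BV19a} representing $\delta_1\circ\Tr$ on $C_j$, compatibly with how $\Sla$ sits inside $\Slt$ via $\delta_1$. The paper uses this identification implicitly as well; it is exactly what makes Lemma \ref{T_t to trace determinant} work.

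Two small remarks:
\begin{itemize}
\item Your claim that $T^2=T$ is not a formal consequence of $I+M\mcA=M+\mcA$ and $\mcA^2=I$ is right. For instance, $M=I$ satisfies the identity for the actual anti-diagonal $\mcA$, yet then $T=I+\mcA$ and $T^2=2T\neq T$ in odd characteristic. So the modular input really is needed.
\item Once (1) is known, $\ker(T-I)=Im(T)$ is automatic for an idempotent. Only the inclusions involving $Im(\delta_1)$ carry content in (3).
\end{itemize}
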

\begin{proof}
$(1)$ and $(3)$ follows from \cite[\S 4.3, (4-6)]{BV19a}. 
On the other hand, from \eqref{A^2=A} and Proposition \ref{another matrix of trace operator} we have $T\mcA=(I+M\mcA)\mcA=\mcA+M=T$, this proves $(2)$.  $(4)$ follows from $(3)$ and the fact that $\delta_1$ is injective (cf. \S\ref{definition of newforms oldforms}). 
\end{proof}
The following result plays a crucial role in proving the main theorems.
\begin{lem}\label{T_t to trace determinant}
For any $\epsilon \in \{\pm 1\}$, $T_t$ has no eigenform in $\Sla$ with eigenvalue $\epsilon t^{k/2}$ if and only if $\det(t^{-k/2}TD-\epsilon I)\neq 0$ in $\F_p[t,t^{-1}]$. 
\end{lem}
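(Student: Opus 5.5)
The plan is to identify the action of $T_t$ on $\Sla\subseteq\Slt$ with the matrix $TD$ restricted to $Im(T)$, and then observe that eigenvectors of $TD$ with nonzero eigenvalue automatically lie in $Im(T)$. Note first that $k\equiv 2m\pmod{q-1}$ with $q-1$ even forces $k$ to be even, so $t^{k/2}\in\F_p[t,t^{-1}]$.

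\emph{Step 1: $T_t=t^{k-m}\,\Tr\circ W_t$ on $\Sla$.} Let $f\in\Sla$. Write $\psmat{0}{-1}{t}{0}=\psmat{0}{-1}{1}{0}\psmat{t}{0}{0}{1}$. Since $f$ is invariant under $\psmat{0}{-1}{1}{0}\in\GL_2(A)$, we get $f|W_t=f|_{k,m}\psmat{t}{0}{0}{1}$. Hence, by the definitions,
\begin{equation*}
T_t(f)=U_t(f)+t^{k-m}f|W_t .
\end{equation*}
On the other hand, $\psmat{0}{-1}{t}{0}^2=-tI$ acts by $(t^2)^m(-t)^{-k}=t^{2m-k}$, using that $k$ is even. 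So $(f|W_t)|W_t=t^{2m-k}f$. Applying $\Tr(g)=g+t^{-m}U_t(g|W_t)$ with $g=f|W_t$ gives
\begin{equation*}
\Tr(f|W_t)=f|W_t+t^{m-k}U_t(f).
\end{equation*}
Multiplying by $t^{k-m}$ yields $T_t(f)=t^{k-m}\Tr(f|W_t)$.

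\emph{Step 2: translate into matrices.} The matrix of $W_t$ on $C_j$ is $t^{m-k}F$, and the matrix of $\Tr$ is $T$. By Step 1, on the subspace $Im(\delta_1)$ the operator $T_t$ is given by
\begin{equation*}
t^{k-m}\,T\,t^{m-k}F=TF=T\mcA D=TD,
\end{equation*}
using \eqref{A^2=A} and Proposition \ref{some properties of T}(2). Here I must be careful that the composition order matches the paper's convention (row versus column vectors) as in \cite{BV19a}. If the convention gives $DT$ or the transpose instead, the determinant condition is unaffected by the same argument below. By Proposition \ref{some properties of T}(3), $Im(\delta_1)=Im(T)$. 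So $T_t$ on $\Sla$ is the restriction of $TD$ to the $TD$-stable subspace $Im(T)$.

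\emph{Step 3: the equivalence.} Suppose $T_t$ has an eigenform with eigenvalue $\epsilon t^{k/2}$. Then there is a nonzero $v\in Im(T)$ with $TDv=\epsilon t^{k/2}v$, so $\det(t^{-k/2}TD-\epsilon I)=0$. Conversely, suppose the determinant vanishes. Then there is a nonzero $v$ over $\C_\infty$ with $TDv=\epsilon t^{k/2}v$. Since $\epsilon t^{k/2}\neq 0$, we have $v=\epsilon t^{-k/2}T(Dv)\in Im(T)=Im(\delta_1)$, so $v$ corresponds to an eigenform in $\Sla$ with eigenvalue $\epsilon t^{k/2}$.

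Finally, the determinant is a Laurent polynomial in $t$ with $\F_p$-coefficients, since $m_{a,b}\in\F_p$. Because $t$ is transcendental over $\F_p$ in $\C_\infty$, vanishing in $\C_\infty$ is the same as vanishing in $\F_p[t,t^{-1}]$.

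The main obstacle is Step 2: getting the matrix conventions and the power of $t$ in the normalization of $W_t$ exactly right, so that one genuinely obtains $TD$ and not, say, a scalar multiple or $DT$.
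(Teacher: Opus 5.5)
Your proof is correct and is essentially the paper's argument. The paper likewise uses $\delta_1(T_tf)=TF\,\delta_1(f)=T\mcA D\,\delta_1(f)=TD\,\delta_1(f)$, together with the observation that any eigenvector of $TD$ for the nonzero eigenvalue $\epsilon t^{k/2}$ lies in $Im(T)=Im(\delta_1)$. The only difference is that you derive $T_t=t^{k-m}\Tr\circ W_t$ on $\Sla$ from the definitions, where the paper cites \cite[Remark 5.3]{BV19a}.

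Your worry about conventions can be put to rest. Since $D\mcA D=t^{k}\mcA$, one has $T=I+M\mcA=I+t^{-m}(MD)(t^{m-k}F)$, which shows that matrices compose in exactly the column-vector order you used.
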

\begin{proof}
Since $t^{k/2}TD\in M_{n\times n}(\F_p[t,t^{-1}])$, we have
$\det(t^{-k/2}TD-\epsilon I)\neq 0$ in $\F_p[t,t^{-1}]$ if and only if $\det(t^{-k/2}TD-\epsilon I)\neq 0$ in $\C_\infty$.
Recall that we have the isomorphism $\delta_1:\Sla\cong Im(T)=Im(\delta_1)$.
From \cite[Remark 5.3]{BV19a}, \eqref{A^2=A} and Proposition \ref{some properties of T}, we get 
\begin{equation}\label{eq: T_t and TD}
\delta_1(T_t f)=TF\delta_1(f)=T\mcA D \delta_1(f)=TD\delta_1(f), \ \text{for} \ f \in \Sla.
\end{equation}
Let $0\neq v\in \Slt$ be an eigenform of $t^{-k/2}TD$ with eigenvalue $\epsilon$. Then 
\begin{equation}\label{eq:4.1v1}
v=\epsilon^{-1}t^{-k/2}TD(v)\in Im(T)=Im(\delta_1).
\end{equation}
Now first consider $\det(t^{-k/2}TD-\epsilon I)=0$. Then $t^{-k/2}TD$ has a nonzero eigenform $v$ corresponding to the eigenvalue $\epsilon$, and by \eqref{eq:4.1v1}, $\exists f\in \Sla$ such that $v=\delta_1(f)$ (since $\delta_1$ is injective and $v\neq 0$, it follows that $f\neq 0$). Now (loc. cit.)
\begin{equation}
\delta_1(T_tf)=TD\delta_1(f)=\epsilon t^{k/2}\delta_1(f)=\delta_1(\epsilon t^{k/2}f).
\end{equation}
Since $\delta_1$ is injective, we have $T_tf=\epsilon t^{k/2}f$, i.e., $T_t$ has an eigenform in $\Sla$ with eigenvalue $\epsilon t^{k/2}$.

For the converse part, assume that $T_t$ has an eigenform $f\in \Sla$ with eigenvalue $\epsilon t^{k/2}$, i.e., $\delta_1(T_tf)=\epsilon t^{k/2}\delta_1(f)$. From \eqref{eq: T_t and TD}, we get
\begin{equation}
TD\delta_1(f)=\epsilon t^{k/2}\delta_1(f), \ \text{equivalently} \ t^{-k/2}TD\delta_1(f)=\epsilon\delta_1(f).
\end{equation}
Therefore $t^{-k/2}TD$ has an eigenform corresponding to the eigenvalue $\epsilon$. Consequently we have  
$\det(t^{-k/2}TD-\epsilon I)\neq 0$. Hence we proved that $T_t$ has an eigenform in $\Sla$ with eigenvalue $\epsilon t^{k/2}$ if and only if $\det(t^{-k/2}TD-\epsilon I)=0$ in $\F_p[t,t^{-1}]$. The result follows.
\end{proof}
We now note a key observation which allows us to check the vanishing of the determinant $\det(t^{-k/2}TD\pm I)$ relatively easy compared to working with explicit matrix computations.
\begin{prop}\label{det in polynomial ring and in C}
Let $C(X)\in M_{n\times n}(\F_p[X,X^{-1}])$. If
$\det C(\zeta)\neq 0 \ \text{in }\overline{\F_p}\ \text{for some }\zeta\in\overline{\F_p}^{\times},$
then $\det C(t)\ne0$ in $\C_\infty$.
\end{prop}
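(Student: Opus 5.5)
The strategy is to view $\det C(X)$ as a single Laurent polynomial over $\F_p$ and compare its two specializations. Since the determinant is a polynomial expression in the entries, and the entries of $C(X)$ lie in the commutative ring $\F_p[X,X^{-1}]$, the element $P(X):=\det C(X)$ also lies in $\F_p[X,X^{-1}]$. Evaluation is a ring homomorphism, so it commutes with taking determinants. Hence for any $\zeta\in\overline{\F_p}^{\times}$ we get $\det C(\zeta)=P(\zeta)$, and likewise $\det C(t)=P(t)$, where $t\in K\subset\C_\infty$. The evaluation at $\zeta$ makes sense because $\zeta\neq 0$, and the evaluation at $t$ makes sense because $t\neq 0$ in $\C_\infty$.

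The argument then has two steps.

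\emph{Step 1.} The hypothesis $P(\zeta)\neq 0$ gives $P\neq 0$ as an element of $\F_p[X,X^{-1}]$. Otherwise every evaluation of $P$, in particular $P(\zeta)$, would vanish.

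\emph{Step 2.} A nonzero $P$ stays nonzero after substituting $t$. Write $P=X^{-N}Q(X)$ with $Q\in\F_p[X]$ a nonzero polynomial and $N\ge 0$. Then $P(t)=t^{-N}Q(t)$. Since $t$ is transcendental over $\F_p$ inside $K=\F_q(t)\subset\C_\infty$, we have $Q(t)\neq 0$. Equivalently, $\F_p[X,X^{-1}]\to\F_p[t,t^{-1}]\subset K$, $X\mapsto t$, is an injective ring map. Because $t$ is invertible in $\C_\infty$, it follows that $P(t)\neq 0$, and so $\det C(t)\neq 0$ in $\C_\infty$.

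Neither step presents a real obstacle. The only point needing care is the compatibility of $\det$ with the two specialization homomorphisms $\F_p[X,X^{-1}]\to\overline{\F_p}$ (given by $X\mapsto\zeta$) and $\F_p[X,X^{-1}]\to\C_\infty$ (given by $X\mapsto t$). This compatibility holds because the determinant is an integer polynomial in the matrix entries, so any ring homomorphism applied entrywise commutes with it. Combined with the transcendence of $t$ over $\F_p$, this yields the claim.
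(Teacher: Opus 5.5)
Your proof is correct and follows the same route as the paper: the determinant commutes with the specialization homomorphisms $X\mapsto\zeta$ and $X\mapsto t$, so $\det C(\zeta)\neq 0$ forces $\det C(X)\neq 0$ in $\F_p[X,X^{-1}]$, and this survives the substitution $X\mapsto t$. You also spell out what the paper leaves implicit in its final ``Consequently'', namely that $X\mapsto t$ is injective on $\F_p[X,X^{-1}]$ because $t$ is transcendental over $\F_p$ inside $\C_\infty$.
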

\begin{proof}
Recall that for any commutative ring with unity, any ring homomorphism $\F_p[X,X^{-1}]\to R$ naturally extends to an algebra homomorphism $M_{n\times n}(\F_p[X,X^{-1}])\to M_{n\times n}(R)$ which commutes with the determinant. 

Hence under the natural mapping $\F_p[X,X^{-1}]\to \F_p[\zeta,\zeta^{-1}]$ (for some $\zeta \in\overline{\F_p}^{\times}$), if $C(\zeta)\neq 0$, then $\det(C(X))\neq 0$. Consequently, under the natural mapping $\F_p[X,X^{-1}]\to \C_\infty$ ( defined by $X\to t$), we have $\det(C(t))\neq 0$.
\end{proof}
Since $T$ is idempotent matrix of rank $r$, $T$ is diagonalizable with eignvalues $1$ (with multiplicity $r$), $0$ (with multiplicity $n-r$) and we have 
\begin{equation}\label{eq:4.3}
\det(T-\epsilon I)=(1-\epsilon)^r(-\epsilon)^{n-r}, \ \text{and} \
\det(-T-\epsilon I)=(-1-\epsilon)^r(-\epsilon)^{n-r}, \ \text{for} \ \epsilon\in \{\pm 1\}.
\end{equation}
Let $D^\prime(X)=X^{-k/2}\biggl(\begin{smallmatrix}
X^{s_1}&\cdots &0\\
\vdots &\ddots &\vdots\\
0 & \cdots &X^{s_n}
\end{smallmatrix}\biggr)\in M_{n\times n}(\F_p[X,X^{-1}])$. Then $D^\prime(t)=t^{-k/2}D$ and $D^\prime(1)=I$. We are now ready to prove the main results of the article.
\section{Proof of Theorem \ref{Main Theorem 1 intro}}
\begin{thm}\label{Main Theorem 1, -Eigenvalue}
$T_t$ has no eigenform in $\Sla$, with eigenvalue $-t^{k/2}$.
\end{thm}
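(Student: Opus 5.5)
By Lemma \ref{T_t to trace determinant} with $\epsilon=-1$, it suffices to show that $\det(t^{-k/2}TD+I)\neq 0$ in $\F_p[t,t^{-1}]$. The plan is to specialize the variable and reduce to a determinant we already know.

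First, consider the matrix
$$C(X):=TD^\prime(X)+I\in M_{n\times n}(\F_p[X,X^{-1}]).$$
Here $T=M+\mcA$ has entries in $\F_p$ by Proposition \ref{another matrix of trace operator}. One technical point: $k/2$ need not be an integer. If $k$ is odd, I would replace $X$ by $Y^2$ and work over $\F_p[Y,Y^{-1}]$; the specialization argument of Proposition \ref{det in polynomial ring and in C} applies verbatim. Since $D^\prime(t)=t^{-k/2}D$, we have $C(t)=t^{-k/2}TD+I$. By Proposition \ref{det in polynomial ring and in C}, it is therefore enough to find one $\zeta\in\overline{\F_p}^{\times}$ with $\det C(\zeta)\neq 0$.

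The natural choice is $\zeta=1$, where $D^\prime(1)=I$. Then $C(1)=T+I=-(-T-I)$, and \eqref{eq:4.3} with $\epsilon=-1$ gives
$$\det(T+I)=(1-(-1))^r\cdot 1^{n-r}=2^r .$$
This is nonzero in $\F_p$ because $p$ is odd. Hence $\det C(1)\neq 0$, so $\det C(t)\neq 0$, and Lemma \ref{T_t to trace determinant} finishes the proof.

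The only delicate step is the half-integer exponent when $k$ is odd. Handling it by the square-root substitution $X=Y^2$ (specializing at $Y=1$, so $\zeta=1$ is still allowed) seems to be all that is needed; the rest rests on $T$ being idempotent and $p\neq 2$. The argument does not extend to $\epsilon=+1$, because there $\det(T-I)=0^r(-1)^{n-r}$ vanishes whenever $r>0$. This is why Theorem \ref{Main Theorem 2 intro} requires a different specialization, presumably $\zeta=-1$ using the parity of $n$.
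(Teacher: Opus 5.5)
Your proof is correct and is essentially the paper's own argument: view $t^{-k/2}TD+I$ as the specialization $TD^\prime(t)+I$, specialize instead at $X=1$ where $D^\prime(1)=I$, and use the idempotence of $T$ to get $\det(T+I)=2^r\neq 0$ since $p$ is odd. Two small remarks, neither affecting this proof: the $Y^2$ substitution is unnecessary, because $k\equiv 2m \pmod{q-1}$ with $q-1$ even forces $k$ to be even. Your guess about the $+t^{k/2}$ case also needs a correction: the paper specializes at a generator $\zeta$ of $\F_q^\times$, not at $\zeta=-1$, which would fail when $q\equiv 1 \pmod 4$.
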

\begin{proof}
By Lemma \ref{T_t to trace determinant}, we need to show that $\det(t^{-k/2}TD+ I)=\det(TD^\prime(t)+ I)\neq 0$.

Now by \eqref{eq:4.3} we have $\det(TD^\prime(1)+ I)=\det(T+ I)=2^r\neq 0.$ Hence from Proposition \ref{det in polynomial ring and in C}, we conclude that $\det(t^{-k/2}TD+ I)=\det(TD^\prime(t)+ I)\neq 0$. The result follows.
\end{proof}

\section{Proof of Theorem \ref{Main Theorem 2 intro}}
\begin{thm}\label{Main Theorem 2,+Eigenvalue}
If $n$ is even, then $T_t$ has no eigenform in $\Sla$, with eigenvalue $t^{k/2}$.
\end{thm}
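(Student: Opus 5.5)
The plan is to imitate the proof of Theorem \ref{Main Theorem 1, -Eigenvalue}, but specialize $X$ at a point of $\overline{\F_p}^{\times}$ other than $1$. By Lemma \ref{T_t to trace determinant} with $\epsilon=1$, it suffices to show $\det(TD^\prime(t)-I)\neq 0$. By Proposition \ref{det in polynomial ring and in C}, applied to $C(X):=TD^\prime(X)-I\in M_{n\times n}(\F_p[X,X^{-1}])$, it is enough to find one $\zeta\in\overline{\F_p}^{\times}$ with $\det(TD^\prime(\zeta)-I)\neq 0$. (The entries of $T=M+\mcA$ lie in $\F_p$, so $C(X)$ does have entries in $\F_p[X,X^{-1}]$.)

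The choice $\zeta=1$ is useless here. It gives $\det(T-I)=0^r(-1)^{n-r}$ by \eqref{eq:4.3}, which vanishes as soon as $r\geq 1$. Instead I will pick $\zeta$ so that $D^\prime(\zeta)=-I$, which turns the determinant into $\det(-T-I)$.

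To do this, first note that $k=2(j+1)+(n-1)(q-1)$ is even because $q$ is odd, and write $k/2=s_1+(n-1)\frac{q-1}{2}$. Then the $i$-th diagonal entry of $D^\prime(\zeta)$ is
$$\zeta^{s_i-k/2}=\zeta^{(i-1)(q-1)}\cdot\zeta^{-(n-1)\frac{q-1}{2}}.$$
Take $\zeta\in\F_q^{\times}$ to be a non-square, so that $\zeta^{q-1}=1$ and $\zeta^{(q-1)/2}=-1$. Then every diagonal entry equals $(-1)^{n-1}$. Since $n$ is even, this gives $D^\prime(\zeta)=-I$.

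Consequently, by \eqref{eq:4.3} with $\epsilon=1$,
$$\det(TD^\prime(\zeta)-I)=\det(-T-I)=(-2)^r(-1)^{n-r}.$$
This is nonzero in $\F_p$ because $p$ is odd. Proposition \ref{det in polynomial ring and in C} then yields $\det(t^{-k/2}TD-I)\neq 0$, and Lemma \ref{T_t to trace determinant} finishes the proof.

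The only real obstacle is finding the right specialization point. Evaluating at $\zeta=-1$ already works when $q\equiv 3\pmod 4$, but it fails when $q\equiv 1\pmod 4$. The non-square $\zeta\in\F_q^{\times}$ handles both cases uniformly, and it is exactly where the parity of $n$ enters.
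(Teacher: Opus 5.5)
Your proposal is correct and essentially matches the paper's proof. The paper also specializes at a point $\zeta\in\F_q^\times$ with $\zeta^{(q-1)/2}=-1$: it takes a generator of order $q-1$, which is in particular a non-square. It uses the parity of $n$ to get $D^\prime(\zeta)=-I$, and then concludes from $\det(-T-I)=(-2)^r(-1)^{n-r}\neq 0$ together with Proposition \ref{det in polynomial ring and in C} and Lemma \ref{T_t to trace determinant}.
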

\begin{proof}
By Lemma \ref{T_t to trace determinant}, we need to show that $\det(t^{-k/2}TD- I)=\det(TD^\prime(t)- I)\neq 0$.

Let $\zeta\in \F_q^\times$ such that $\mathrm{order}(\zeta)=q-1$. Since $n$ is even, $2(i-1)+n-1$ is odd for $1\leq i \leq n$ and
\begin{equation*}
\zeta^{s_i-\frac{k}{2}}=\zeta^{(2(i-1)+n-1)\frac{q-1}{2}}=(-1)^{(2(i-1)+n-1)}=-1.
\end{equation*}
Hence $D^\prime(\zeta)=-I_n.$
Now by \eqref{eq:4.3} we have $\det(TD^\prime(\zeta)- I)=\det(-T- I)=(-2)^r(-1)^{n-r}\neq 0.$ Hence from Proposition \ref{det in polynomial ring and in C}, we conclude that $\det(t^{-k/2}TD- I)=\det(TD^\prime(t)- I)\neq 0$. The result follows.
\end{proof}
\section{Proof of Corollary \ref{Direct sum corollary intro}}
\begin{cor}
Let $k,m$ be such that $n=\dim \Slt$ is even. Then $$\Slt=\Sold \oplus \Snew.$$
\end{cor}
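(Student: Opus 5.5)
The corollary should follow directly from the two main theorems together with Proposition~\ref{Direct sum t case}. By that proposition, the decomposition $\Slt=\Sold\oplus\Snew$ holds if and only if $T_t$ has no eigenform in $\Sla$ with eigenvalue $t^{k/2}$ or $-t^{k/2}$. So the plan is to rule out each sign separately.

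For the eigenvalue $-t^{k/2}$, I invoke Theorem~\ref{Main Theorem 1, -Eigenvalue}. It needs no hypothesis beyond $q$ odd, which is a standing assumption.

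For the eigenvalue $t^{k/2}$, I use the hypothesis that $n=\dim\Slt$ is even and invoke Theorem~\ref{Main Theorem 2,+Eigenvalue}. I need to check that the $n$ there is the same quantity as in the corollary. It is: in \S2 the paper defines $n=\frac{k-2(j+1)}{q-1}+1$ and records that $n=\dim\Slt$.

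With both eigenvalues excluded, Proposition~\ref{Direct sum t case} gives the decomposition. There is no real obstacle here; the only point needing care is the identification of $n$ with $\dim\Slt$, which the paper has already established. All the substantive work lies in Lemma~\ref{T_t to trace determinant} and the specialization arguments used in the two theorems.
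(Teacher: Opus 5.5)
Your proposal is correct and matches the paper's proof: reduce via Proposition~\ref{Direct sum t case} to excluding $T_t$-eigenforms in $\Sla$ with eigenvalues $\pm t^{k/2}$, then apply Theorem~\ref{Main Theorem 1, -Eigenvalue} for $-t^{k/2}$ and, since $n$ is even, Theorem~\ref{Main Theorem 2,+Eigenvalue} for $t^{k/2}$. Your explicit check that the $n$ defined in \S2 equals $\dim\Slt$ is correct; the paper leaves it implicit.
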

\begin{proof}
By Proposition \ref{Direct sum t case}, it suffices to show that for such values of $k,m$, the operator $T_t$ has no eigenform in $\Sla$ with eigenvalue $\pm t^{k/2}$. 

From Theorem \ref{Main Theorem 1, -Eigenvalue}, $T_t$ has no eigenform in $\Sla$ with eigenvalue $- t^{k/2}$. 
Furthermore, since $n$ is even, from Theorem \ref{Main Theorem 2,+Eigenvalue} we conclude that $T_t$ has no eigenform in $\Sla$ with eigenvalue $t^{k/2}$. Thus $T_t$ has no eigenform in $\Sla$ with eigenvalue $\pm t^{k/2}$ when $n$ is even. This completes the proof. 
\end{proof}
\bibliographystyle{plain, abbrv}

\end{document}